\documentclass{imsart}

\usepackage{amsmath}
\usepackage{amsthm}
\usepackage{amssymb}

\usepackage{mhequ}
\usepackage{graphicx}
\usepackage{placeins}
\usepackage{xfrac}
\usepackage{times}
\usepackage{mathrsfs}
\usepackage[numbers]{natbib}
\usepackage[colorlinks]{hyperref} 

\startlocaldefs
\theoremstyle{plain}

\newtheorem{theorem}{Theorem}[section]

\newtheorem{assumption}[theorem]{Assumption}

\newtheorem{lemma}[theorem]{Lemma}
\newtheorem{definition}[theorem]{Definition}

\renewcommand{\P}{\mathcal P}
\newcommand{\X}{\mathbf X}

\newcommand{\be}{\begin{equs}}
\newcommand{\ee}{\end{equs}}
\newcommand{\vertiii}[1]{{\left\vert\kern-0.25ex\left\vert\kern-0.25ex\left\vert #1 
    \right\vert\kern-0.25ex\right\vert\kern-0.25ex\right\vert}}
\newcommand{\lvertiii}{{\vert\kern-0.25ex\vert\kern-0.25ex\vert}}    
\newcommand{\rvertiii}{{\vert\kern-0.25ex\vert\kern-0.25ex\vert}}
\newcommand{\1}{\mathbf 1}    
\newcommand{\R}{\mathbb R}
\newcommand{\bb}[1]{\mathbb #1}
\newcommand{\eps}{\epsilon}

\DeclareMathOperator{\esssup}{esssup}
\DeclareMathOperator{\TV}{TV}

\newcommand{\ci}{\perp \! \! \! \perp}

\renewcommand{\S}{\mathcal S}

\renewcommand{\Pr}{\mathbf P}

\newcommand{\trel}{\tau_{\mathrm{rel}}}
\newcommand{\thit}{\tau_{\mathrm{hit}}}

\newcommand{\E}{\mathbf E}

\endlocaldefs
    

\begin{document}
\begin{frontmatter}

\title{Fast Mixing of Metropolis-Hastings with Unimodal Targets}
\runtitle{Unimodal Targets}

\author{\fnms{James E.} \snm{Johndrow}\corref{} \ead[label=e1]{johndrow@stanford.edu}}
\address{390 Serra Mall \\ Stanford, CA, USA \\ \printead{e1}}
\affiliation{Stanford University}
\and
\author{\fnms{Aaron} \snm{Smith}\ead[label=e2]{asmi28@uottawa.ca}}
\address{585 King Edward Drive \\ Ottawa, ON, Canada \\ \printead{e2}}
\affiliation{University of Ottawa}

\runauthor{J. Johndrow and A. Smith}

\begin{abstract}
A well-known folklore result in the MCMC community is that the Metropolis-Hastings algorithm mixes quickly for any unimodal target, as long as the tails are not too heavy. Although we've heard this fact stated many times in conversation, we are not aware of any quantitative statement of this result in the literature, and we are not aware of any quick derivation from well-known results. The present paper patches this small gap in the literature, providing a generic bound based on the popular ``drift-and-minorization'' framework of \citet{rosenthal1995minorization}. Our main contribution is to study two sublevel sets of the Lyapunov function and use path arguments in order to obtain a sharper general bound than what can typically be obtained from multistep minorization arguments.
\end{abstract}

\begin{keyword}[class=MSC60J05]
\kwd{Metropolis-Hastings, Markov chain Monte Carlo, Spectral Gap}
\end{keyword}
\end{frontmatter}

\section{Introduction}
The Metropolis algorithm \citep{metropolis1953equation} and its generalization, the Metropolis-Hastings algorithm \citep{hastings1970monte}, have been exceptionally successful in the numerical approximation of analytically intractable integrals. Because these algorithms are both important and difficult to analyze, there is an enormous literature on the properties of Metropolis-Hastings chains in the statistics, computer science, mathematics and physics communities (see \textit{e.g.} the popular textbooks \cite{meyn1993markov,levin2009markov}). Despite the size of this literature, obtaining reasonable quantitative bounds on the convergence rates of specific Markov chains used in statistics can be quite difficult, even when there are good heuristic reasons that convergence should be quick \cite{jones2001honest,diaconis2008gibbs}. Recently, the authors needed to use an ``obvious" folklore result that does not seem to be in the literature: reasonable Metropolis-Hastings chains targetting unimodal distributions will mix quickly. The main purpose of the paper is to provide a general and quantitatively useful version of this folklore result (see Theorem \ref{ThmMainThm}). 

We were originally motivated by the need to prove a sharp lower bound on the spectral gap of a Metropolis-Hastings algorithm for logistic regression in a ``rare-success" asymptotic regime (see \citet{johndrow2016mcmc}). When the standard deviation of the proposal kernel was similar to the standard deviation of the target distribution, it was straightforward to obtain a quantitatively strong version of the ``minorization" condition required by the ``drift-and-minorization" approach of \citet{rosenthal1995minorization}. However, this argument becomes much more delicate when the proposal variance does not closely match the target variance. Although we were motivated by a specific problem, similar problems appear more generally when the proposal kernel of an MCMC algorithm is not perfectly tuned to the target. This sort of (initial) bad tuning can be difficult to avoid in contexts such as \citet{johndrow2016mcmc} where the posterior distribution is very far from Gaussian.

To address this technical problem, we combine pathwise arguments (as studied in \cite{yuen2000applications}) with coupling arguments to obtain reasonable estimates of mixing times inside of compact sublevel sets of the Lyapunov function. We then apply the ``drift and minorization" approach of \cite{rosenthal1995minorization} to obtain mixing bounds on the full state space. 
This argument is presented here for generic random-walk type Metropolis-Hastings, and thus should be broadly useful.

\subsection{Related Work}

Popular approaches for establishing bounds on convergence rates for Markov chains include the Lyapunov-small set techniques of \cite{khasminskii2011stochastic, meyn1993markov, rosenthal1995minorization}, and geometric inequalities such as Poincar\'{e}, Cheeger, and log-Sobolev inequalities \cite{diaconis1993comparison, diaconis1996nash, diaconis1996logarithmic, lawler1988bounds, sinclair1989approximate,yuen2000applications}.  Under suitable conditions on the tails of the target and the proposal kernel, drift and minorization arguments show that the Metropolis-Hastings algorithm will converge to the target at an exponential rate \cite{mengersen1996rates, jarner2000geometric, jarner2003necessary}.  The paper \cite{jarner2004conductance} studies essentially the same question addressed in the present paper using Cheeger inequalities, but restricts their attention only to log-concave target distributions.

\section{Notation and Standing Assumptions}

Consider a Markov kernel $\P$ with a unique invariant measure $\mu : \mu \P = \mu$. The spectrum of $\P$ is the set $S$
\be
S(\P) = \{ \lambda \in \bb C \setminus \{0\} : (\lambda I - \P)^{-1} \text{ is not a bounded linear operator on } L^2(\mu) \}
\ee
and the spectral gap 
\be
\alpha = 1 - \sup \{ |\lambda| \, : \, \lambda \in S, \, \lambda \ne 1 \}
\ee
when the eigenvalue $1$ has multiplicity 1, and $\lambda^*(\P) = 0$ else. Define the relaxation time $\trel \equiv \alpha^{-1}$, and the mixing time $\tau$ of $\P$ on a set $\Theta$,
\be
\tau \equiv \min \{t \, : \, \sup_{x \in \Theta} \, \|\delta_x \P^t - \mu \|_{\TV} < 1/4 \}, 
\ee
which need not be finite. 

The following is a strong notion of unimodality on a set:

\begin{definition}
Fix an interval $[a,b] \subset \mathbb{R}$. Call a function $f \, : \, [a,b] \rightarrow \mathbb{R}$ \textit{unimodal} with mode $m \in I$ if $f$ is monotonely increasing on $[a,m]$ and monotonely decreasing on $[m,b]$.
\end{definition}

\begin{definition} \label{def:Unimodal}
Fix a convex subset $\Theta \subset \mathbb{R}^{d}$. Call a function $f \, : \, \Theta \rightarrow \mathbb{R}$ \textit{multivariate unimodal} if, for all $x \in \Theta$ and $v \in \mathbb{R}^{d}$, the function $f_{x,v}(s) \equiv f(x + sv)$ is unimodal. 

Note that a multivariate unimodal function $f$ will have a (possibly non-unique) maximum $M$; we call any point $m$ satisfying $f(m) = M$ a \textit{mode} of $f$.
\end{definition}

Throughout the remainder of the paper, we fix scale $\epsilon > 0$ for typical step sizes of $\P$, in a way that is made concrete in the context of the following assumptions on $\P$:

\begin{assumption} \label{ass:MHKernel}
 The Markov kernel of interest $\P$ is a Metropolis-Hastings kernel with target distribution $\mu$ and proposal kernel $Q$ that satisfies
 \begin{enumerate}
  \item $Q(x,\cdot)$ has density $q(x,\cdot)$ and $\mu$ has density $p(\cdot)$ with respect to Lebesgue measure.
  \item $q$ is isotropic, i.e. it is of the form $q(x,y) = q(\|x-y\|)$ for some density $q$ that is unimodal.
  \item For some $\delta_{1}, c_{1}, c_{2} > 0$, $q$ satisfies 
\be \label{IneqFunnyQBounds}
q(x) & \geq \delta_{1} \1_{\|x \| \leq \epsilon} \\
q(x) & \leq c_{1} e^{-\frac{c_{2}\|x\|}{\epsilon}}.
\ee 

  \item There exist constants $\gamma \in (0,1)$ and $0 \le K < \infty$ and a Lyapunov function $V : \R^d \to [0,\infty)$ satisfying
  \be \label{IneqGenLyap}
  (\P V)(x) \le \gamma V(x) + K.
  \ee
 \end{enumerate}
\end{assumption}

The first three assumptions hold for most Metropolis-Hastings proposal kernels used in practice, and we expect them to be easy to verify. The last condition is stronger, and it can be difficult to verify that the condition holds with reasonably small constants $\gamma, K$.  However, Lyapunov functions do exist under fairly mild conditions that have been well-studied (see \textit{e.g.} \citep{mengersen1996rates}, \citep{jarner2000geometric}).

For chains of this form, define the Metropolis-Hastings acceptance probability by
\be
\alpha(x,y) \equiv 1 \wedge \frac{p(y) q(y,x)}{p(x) q(x,y)}.
\ee

\section{Main Result}
For any $\Theta \subset \R^d$ with $\mu(\Theta) > 0$, denote by $\mu_{\Theta}$ and $p_{\Theta}$ the usual restrictions of $\mu, p$ to $\Theta$:
\be 
\mu_{\Theta}(A) = \frac{\mu(\Theta \cap A)}{\mu(\Theta)}, \quad p_{\Theta}(x) = \frac{1}{\mu(\Theta)} p(x) \textbf{1}_{x \in \Theta}.
\ee 
Similarly, denote by $\P_{\Theta}$ the usual restriction of $\P$ to $\Theta$. That is, $\P_\Theta$ is a Metropolis-Hastings chain with proposal kernel $Q$ and target distribution $\mu_{\Theta}$. Denote by $B_r(x)$ a Euclidean ball of radius $r$ centered at $x \in \R^d$. Our main result is the following

\begin{theorem} \label{ThmMainThm}
Let $\P$ be a Metropolis-Hastings transition kernel on $\mathbb{R}$ satisfying the conditions of Assumption \ref{ass:MHKernel}. 
Let $\Theta \subset \R$ be a set satisfying
\begin{enumerate}
 \item $0<p<\infty$ is unimodal on $\Theta$ with mode $m \in \Theta$, and $\Theta \subseteq B_L(m)$ for some constant $L > 0$.
 \item $p$ satisfies 
 \be \label{EqNearlyUniform}
 \inf_{x \in B_{2 \epsilon}(m)} p(x) > \frac{15}{16} p(m),
 \ee 
i.e. it is ``almost constant'' on a ball of radius $2 \epsilon$ around the mode.
\end{enumerate}
Then there exists a constant $C = C(c_{1},c_{2}, \delta_{1}) < \infty$ such that the mixing time $\tau$ of $\P_\Theta$ satisfies 
\be \label{IneqMainThmConc1}
\tau < C \epsilon^{-3} \delta_1^{-1} L^4 p_\Theta(m).
\ee
If the set $\Theta$ is ``small but not too small'', i.e.
 \be \label{eq:ThetaMinSize}
 \Theta \supset \left\{ x \in \mathbb{R}^d \, : \, V(x) \leq \frac{8}{1-\gamma} \left( \frac{4K}{1-\gamma} + K C \epsilon^{-2} \delta_1^{-1} L^3 p_\Theta(m) \right) \right\},
 \ee
we also have
\be \label{IneqMainThmConc2}
\trel(\P) &\le C \epsilon^{-3} \delta_1^{-1} L^4 p_\Theta(m). 
\ee
\end{theorem}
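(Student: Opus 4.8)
The plan is to establish the two conclusions in turn. Inequality \eqref{IneqMainThmConc1} is a statement purely about the restricted chain $\P_\Theta$ on the bounded set $\Theta$, and I would obtain it by combining a path (Poincar\'e) bound on the spectral gap of $\P_\Theta$ with a coupling argument near the mode, exactly as advertised in the introduction. Inequality \eqref{IneqMainThmConc2} concerns the full chain $\P$, and I would deduce it from the first bound through the drift-and-minorization machinery of \cite{rosenthal1995minorization}, using the sublevel-set hypothesis \eqref{eq:ThetaMinSize} to fix the ``small set'' at the right size.

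For \eqref{IneqMainThmConc1} I would first lower-bound the spectral gap of $\P_\Theta$ by routing, for each ordered pair $(x,y)$, the straight-line flow carrying mass $\mu_\Theta(dx)\,\mu_\Theta(dy)$ and realizing it as a chain of order $|x-y|/\epsilon$ transitions of size $\le\epsilon$, each of proposal density at least $\delta_1$ by Assumption \ref{ass:MHKernel}(3). The decisive structural input is unimodality together with the symmetry of $q$: a proposed step \emph{toward} the mode $m$ is always accepted, so by reversibility the conductance of the transitions crossing a point $z$ is at least of order $\delta_1\,\epsilon\,\mu_\Theta\big((z,z+\tfrac{\epsilon}{2}]\big)$, i.e. it is controlled by the mass in a short window on the far side of $z$. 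Unimodality then caps the congestion: because the density is monotone beyond $z$, the ratio $\mu_\Theta(\{\,y>z\,\})/\mu_\Theta((z,z+\tfrac{\epsilon}{2}])$ is at most of order $L/\epsilon$, so the worst congestion over $z$ is polynomial in $L$ and $\epsilon^{-1}$ and $\delta_1^{-1}$. The one place where monotonicity alone does not suffice is the straddling region at the mode, where the ``uphill'' direction flips; here the ``almost constant'' hypothesis \eqref{EqNearlyUniform} forces $\min(p(u),p(v))\ge\tfrac{15}{16}p(m)$ for the crossing pairs and keeps the conductance across $m$ nondegenerate. I would then convert this gap bound into the mixing-time bound \eqref{IneqMainThmConc1} by a coupling argument: the near-uniform behavior on $B_{2\epsilon}(m)$ makes $\P_\Theta$ essentially the accepted random walk there, which yields a strong local minorization and lets me control the worst-case start (whose contribution carries the factor $p_\Theta(m)$) without the usual logarithmic overhead.

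For \eqref{IneqMainThmConc2} I would feed the resulting relaxation-time bound for $\P_\Theta$ into drift-and-minorization. The Lyapunov condition \eqref{IneqGenLyap} rearranges to $(I-\P)V\ge(1-\gamma)V-K$, so $V$ contracts geometrically outside a sublevel set, while inside any sublevel set contained in $\Theta$ the chain mixes rapidly by the first part. This is where I would study \emph{two} nested sublevel sets $\{V\le d_1\}\subset\{V\le d_2\}\subseteq\Theta$: the larger set, whose size is precisely what \eqref{eq:ThetaMinSize} demands, is chosen large enough that the $\epsilon$-step chain cannot escape it before the restricted chain has mixed, which upgrades the fast mixing of $\P_\Theta$ into a genuine single-step minorization on $\{V\le d_1\}$ whose constant does \emph{not} degrade with a number of steps. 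Combining this minorization with the drift through the quantitative estimates of \cite{rosenthal1995minorization} gives a geometric convergence rate and hence \eqref{IneqMainThmConc2}; the thresholds $\tfrac{8}{1-\gamma}$ and $\tfrac{4K}{1-\gamma}$ in \eqref{eq:ThetaMinSize} are exactly the levels at which the drift contraction and the mixing-based minorization balance.

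The main obstacle I anticipate is the continuous-state congestion estimate of the first part: setting up the canonical flows on $\R$ rigorously, discretizing them into admissible $\epsilon$-transitions, and bounding the acceptance ratios so that the conductance lower bound holds uniformly, with the mode and the boundary of $\Theta$ appearing as the two competing bottlenecks. A secondary but essential subtlety is making the two-sublevel-set passage from ``fast mixing of $\P_\Theta$'' to ``single-step minorization of $\P$'' quantitatively tight, since a naive multistep minorization would reintroduce precisely the loss the path argument was designed to remove.
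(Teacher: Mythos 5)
Your proposal follows the same overall architecture as the paper's proof: a Yuen-style path (Poincar\'e) bound for the spectral gap of $\P_\Theta$, with unimodality capping the congestion of straight-line flows and the near-uniformity hypothesis \eqref{EqNearlyUniform} handling the edges that straddle the mode (this is the paper's Lemma \ref{Lemma2Stat}); and, for \eqref{IneqMainThmConc2}, the two-nested-sublevel-set argument in which the drift \eqref{IneqGenLyap} confines the chain to $\Theta$ for $\tau$ steps, so that the fast mixing of $\P_\Theta$ yields a minorization of the $\tau$-step kernel $\P^{\tau}$ by $\tfrac{3}{8}\mu_\Theta$ on $\{V \le 4K/(1-\gamma)\}$, after which Harris/Rosenthal-type estimates give the gap of $\P$ (this is the paper's Lemma \ref{Lemma1Stat}). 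Two smaller points here: what you produce is a $\tau$-step minorization, not literally a single-step one (though your parenthetical --- that its constant does not degrade with the number of steps --- is exactly the point); and if you run the first part through conductance plus Cheeger rather than the flow/Poincar\'e comparison you describe first, the Cheeger squaring yields a dependence like $\delta_1^{-2}\epsilon^{-4}$, which cannot recover the $\delta_1^{-1}\epsilon^{-3}$ scaling claimed in \eqref{IneqMainThmConc1}; you must stay with the path form.

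The genuine gap is the passage from the spectral gap of $\P_\Theta$ to the \emph{worst-case-start} total-variation bound \eqref{IneqMainThmConc1}. Your only mechanism for this step is the local minorization coming from \eqref{EqNearlyUniform}, but that applies only to chains started in (or having already reached) a neighborhood of $m$; it says nothing about a start $x \in \Theta$ far from the mode, where $p_\Theta(x)$ may be arbitrarily small. For such starts, the generic reversible-chain conversion from gap to TV mixing pays a warmness factor of order $\log\bigl(1/\inf_\Theta p_\Theta\bigr)$, and this is not controlled by any of $\epsilon, \delta_1, L, p_\Theta(m)$: for $p_\Theta \propto e^{-\beta|x-m|}$ on $[m-L,m+L]$ with $\beta$ large, the log is of order $\beta L$ while $p_\Theta(m) \approx \beta/2$, so the resulting bound exceeds \eqref{IneqMainThmConc1} by an unbounded factor $\beta L$. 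The paper closes exactly this hole with a separate ingredient you do not have, Lemma \ref{Lemma3Stat}: a coupling of the Metropolis chain with the free random walk driven by the same proposal increments (using that proposals toward the mode are accepted, the sub-exponential tail bound in \eqref{IneqFunnyQBounds}, and Berry--Esseen) shows that from \emph{any} $x \in \Theta$ the chain enters $[m-\epsilon, m+\epsilon]$ within $O(L^2/\epsilon^2)$ steps with probability bounded below, whence $\tau(\P_\Theta) \le \tau_{[m-\epsilon,m+\epsilon]}(\P_\Theta) + C L^2/\epsilon^2$; only after this hitting-time reduction does your near-mode minorization and the gap bound take over. Without this (or an equivalent) argument for starts in the low-density region, the first conclusion of the theorem --- and therefore also the second, which is deduced from it --- cannot be obtained at the stated rate.
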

Although our final result is restricted to $\R$, several of the Lemmas used in proving the result hold with almost no changes in $\R^d$ and could be used to prove similar results for higher-dimensional target distributions. Thus, we prove most of the results in $\R^d$ and specialize to the case of $\R$ for the final Lemma.

\section{Proofs}

We break the proof up into three lemmas, each of which might be individually useful for proving similar results. The first two lemmas are proved on $\R^d$; the final lemma is proved only on $\R$.

\subsection{Mixing: From Very Small Sets to Small Sets}
The first Lemma shows that a Lyapunov condition combined with a bound on the mixing time $\tau$ of $\P_\Theta$ for a ``small'' sublevel set $\Theta$ of $V$ allows us to bound the spectral gap by the inverse of the mixing time. The key idea is that a Markov chain started from a point $x$ inside of a ``very small'' sublevel set is unlikely to escape from the slightly larger ``small" set within its first $\tau$ steps. This allows us to minorize $\P^\tau$ by $\mu_\Theta$ inside of the ``very small'' set $\{ x : V(x) < 4K (1-\gamma)^{-1}\}$ and then apply the usual Harris theorem to obtain a bound on the spectral gap. In essence we use the Lyapunov condition and the mixing time bound to obtain a minorization condition on the time scale of the mixing time -- that is, for $\P^\tau$ -- rather than directly showing a multistep minorization condition.

\begin{lemma} \label{Lemma1Stat}
Suppose that $V$ is a Lyapunov function of $\P$ satisfying \eqref{IneqGenLyap}, $\Theta \subset \R^d$ satisfies \eqref{eq:ThetaMinSize}, and the mixing time of $\P_{\Theta}$ is $\tau < \infty$. Then there exists $C = C(\gamma,K)$ independent of $\tau$ and $d$ so that the relaxation time of $\P$ is at most 
\be 
\trel(\P) \leq C \tau.
\ee 
\end{lemma}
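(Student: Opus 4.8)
The plan is to make rigorous the three-step program sketched before the statement: use the drift to confine the chain to $\Theta$ over a window of length $\tau$, transfer the mixing of the restricted kernel $\P_\Theta$ to the full kernel $\P$ on that window, and feed the resulting contraction on a small set together with \eqref{IneqGenLyap} into the drift-and-minorization machinery. Write $u \equiv K/(1-\gamma)$, let $S \equiv \{x : V(x) < 4u\}$ be the ``very small'' set, and note that \eqref{eq:ThetaMinSize} guarantees $\Theta \supseteq \{x : V(x) \le R\}$ for the explicit threshold $R$ appearing there, with $R \gg 4u$. First I would iterate \eqref{IneqGenLyap} to get $(\P^t V)(x) \le \gamma^t V(x) + u \le 5u$ for every $x \in S$ and every $t \ge 0$, so that a chain launched from $S$ is mean-reverting and its Lyapunov value stays of order $u$ in expectation throughout the window.

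Next I would bound the escape probability $\pi_x \equiv \Pr_x[\exists\, t \le \tau : X_t \notin \Theta]$ for $x \in S$; since $X_t \notin \Theta$ forces $V(X_t) > R$, this is a maximal inequality for the process $V(X_t)$, and the target is $\sup_{x \in S} \pi_x \le 1/8$ with a constant depending only on $\gamma, K$ (this is exactly what the size of $R$ in \eqref{eq:ThetaMinSize} is calibrated to provide). I would then couple $\P$ and $\P_\Theta$ from the same $x \in S$ by feeding both the same proposal and the same acceptance uniform at each step: the two chains agree until the first step at which $\P$ accepts a proposal lying outside $\Theta$, an event contained in $\{\exists\, t \le \tau : X_t \notin \Theta\}$. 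Hence, off an event of probability $\le \pi_x$, the trajectory $(X_t)_{t \le \tau}$ coincides with the restricted trajectory $(X_t^\Theta)_{t \le \tau}$, so that $\| \P^\tau(x,\cdot) - \mu_\Theta \|_{\TV} \le \| \P_\Theta^\tau(x,\cdot) - \mu_\Theta \|_{\TV} + \pi_x \le 1/4 + \pi_x \le 3/8$ uniformly over $x \in S$, using the mixing time of $\P_\Theta$ for the first term. This uniform $\tau$-step closeness to the fixed measure $\mu_\Theta$ is a minorization/contraction condition on the small set $S$, with a universal constant that does not involve the Metropolis-Hastings parameters $\epsilon, \delta_1, L$.

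I would then combine the drift \eqref{IneqGenLyap} (for which $S$ is the relevant small set, since $4u > 2u$) with this $\tau$-step contraction and invoke the standard drift-and-minorization theorem of \citet{rosenthal1995minorization}: it yields geometric ergodicity of $\P$ at a rate governed by the universal contraction constant over blocks of length $\tau$, hence $\tmix(\P) \le C(\gamma,K)\,\tau$. Because $\P$ is reversible with respect to $\mu$, the usual comparison $\trel(\P) \le 1 + \tmix(\P)/\log 2$ then converts this into $\trel(\P) \le C(\gamma,K)\,\tau$; the constant inherits dependence only on $\gamma, K$ precisely because both the contraction constant and the drift parameters are $(\gamma,K)$-quantities.

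The hard part will be the escape estimate. A naive union bound gives $\pi_x \le \sum_{t \le \tau} \Pr_x[V(X_t) > R] \le \sum_{t \le \tau} \E_x[V(X_t)]/R \le 5u\tau/R$, which is too lossy: with the value of $R$ furnished by \eqref{eq:ThetaMinSize} this quantity need not be small and carries a spurious factor of $\tau$. The delicate point is therefore to control $\sup_{t \le \tau} V(X_t)$ without paying this factor, exploiting the geometric mean reversion of the drift (so that high excursions of $V$ are rare and are rapidly pulled back) rather than summing single-time tail bounds, and to do so with constants matching the threshold in \eqref{eq:ThetaMinSize}. This is the one step where the specific geometry encoded in $R$ must be used, and it is where I would expect the real work to lie.
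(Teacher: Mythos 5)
Your outline reproduces the paper's proof almost step for step: the same two nested sublevel sets, the same coupling of $\P$ and $\P_\Theta$ through shared proposals and acceptance variables (the paper invokes this as the maximal coupling inequality), the same triangle-inequality bound $1/4 + 1/8 = 3/8$ giving a minorization of $\P^\tau$ on the very small set by $\mu_\Theta$, and the same final step of feeding a $\tau$-step minorization plus the drift \eqref{IneqGenLyap} into a Harris-type theorem (the paper uses Theorem 1 of \citet{hairer2011yet} and then converts the geometric rate of $\P^\tau$ into an $L^2(\mu)$ spectral gap for the reversible kernel $\P$ via Theorem 2 of \citet{roberts1997geometric}; your route through \citet{rosenthal1995minorization} plus the reversible comparison $\trel \le 1 + \tmix/\log 2$ is the same idea, though on a general state space that comparison is itself justified by exactly the Roberts--Rosenthal result, so you should cite it rather than treat it as free).

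The one place you diverge is your final paragraph, and there you have talked yourself out of the correct argument. The escape estimate is \emph{not} where the real work lies: the paper proves it by precisely the naive union bound you dismiss. The point you are missing is that the sublevel-set threshold is calibrated to grow linearly in $\tau$. In the paper's proof one takes $R_1 = 4K/(1-\gamma)$ and $R_2 = 8\left(\tfrac{4K}{(1-\gamma)^2} + \tfrac{K\tau}{1-\gamma}\right)$, and the hypothesis ``$\Theta$ satisfies \eqref{eq:ThetaMinSize}'' is to be read as $\Theta \supset \S(R_2)$: the term $KC\epsilon^{-2}\delta_1^{-1}L^3 p_\Theta(m)$ appearing in \eqref{eq:ThetaMinSize} is a proxy for $K\tau/(1-\gamma)$ once $\tau$ has been bounded by the other two lemmas. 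With this $\tau$-dependent threshold, the factor of $\tau$ produced by the union bound is not spurious --- it cancels against $R_2$:
\be
\Pr[\kappa \le \tau] \le R_2^{-1} \sum_{k=0}^{\tau} \E[V(X_k)]
\le R_2^{-1}\left( \frac{R_1}{1-\gamma} + \frac{K\tau}{1-\gamma} \right) = \frac18 .
\ee
(Note you should sum $\gamma^k V(x)$ as a geometric series rather than bounding each $\E[V(X_k)]$ by $5K/(1-\gamma)$; your cruder bound $5u\tau/R$ only loses a constant factor, but the geometric summation gives exactly $1/8$ against the stated $R_2$.) No maximal inequality exploiting mean reversion, and no new idea, is needed; the calibration of \eqref{eq:ThetaMinSize} that you correctly guessed ``is exactly what $R$ is calibrated to provide'' is the entire content of that step. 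With that reading, your proposal closes and is the paper's proof.
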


\begin{proof}
Let
\be
\S(R) &= \{ x : V(x) \le R \}, \\
R_1 &= \frac{4K}{1-\gamma}, \quad R_2 = 8 \left( \frac{4K}{(1-\gamma)^2} + \frac{K \tau}{1-\gamma} \right)
\ee
and fix $x \in \S(R_1)$. Let $\{X_{t}\}_{t \geq 0}$ be a Markov chain with transition kernel $\P$ and initial state $X_0 = x$. Denote by $\kappa = \inf \{t : X_t \in \Theta^c\}$ the first hitting time of $\Theta^c = \{ x \in \R^d : x \notin \Theta\}$. By Inequality \eqref{IneqGenLyap} and Markov's inequality,
\be 
\Pr[ \kappa \le \tau ] &\leq \Pr[\max_{0 \leq k \leq \tau} V(X_k) \geq R_2] \le \sum_{k=0}^\tau \Pr\left[ V(X_t) > R_2 \right] \\
&\leq R_2^{-1} \sum_{k=0}^\tau \E[V(X_t)] \\
&\leq R_2^{-1} \sum_{k=0}^\tau \left( \gamma^k V(x) + K \sum_{j=0}^{k-1} \gamma^j \right) \\
&\leq R_2^{-1} \sum_{k=0}^\tau  \left( \gamma^k V(x) + K \frac{1-\gamma^{k+1}}{1-\gamma} \right) \\
&\leq R_2^{-1} \left(\frac{R_1}{1-\gamma} + K \frac{(1-\gamma) \tau +\gamma^{\tau+1}-\gamma}{(1-\gamma)^2} \right) \\
&\leq R_2^{-1} \left(\frac{R_1}{1-\gamma} + K \frac{\tau}{1-\gamma} \right) \leq \frac{1}{8}.
\ee 
 
Applying this bound, the maximal coupling inequality, and the triangle inequality, we obtain the minorization bound:
\be 
\sup_{x \in R_1} \| \delta_x \P^\tau - \mu_\Theta \|_{\TV} &\leq \sup_{x \in R_1} \| \delta_x \P_\Theta^\tau - \mu_{\Theta} \|_{\TV} + \sup_{x \in R_1} \|\delta_x \P^\tau - \delta_x \P^\tau_\Theta \|_{\TV} \\
&\leq \frac14 + \frac18 = \frac38.
\ee 

So then $\P^\tau$ satisfies
\be
\inf_{x \in \S(R_1)} \delta_x \P^\tau(\cdot) &\ge \frac38 \mu_\Theta(\cdot) \\
(\P^\tau V)(x) &\le \gamma^\tau V(x) + \frac{K(1-\gamma^\tau)}{1-\gamma}.
\ee

Applying this minorization bound and the Lyapunov bound \eqref{IneqGenLyap}
along with Theorem 1 of \citet{hairer2011yet} to the Markov operator $\P^\tau$ implies that we can bound the geometric convergence rate of convergence $\bar \alpha$  of $\P^\tau$ via
\be 
\bar \alpha &= \inf_{\alpha_0 \in (0,5/8)} \left(1-\frac58 + \alpha_0 \right) \vee \frac{2 + \frac{4K}{1-\gamma} \frac{\alpha_0(1-\gamma)}{K (1-\gamma^\tau)} \left( \gamma^\tau + \frac{2 K (1-\gamma^\tau) (1-\gamma)}{(1-\gamma) 4K } \right)}{2+ \frac{4 K}{1-\gamma} \frac{\alpha_0(1-\gamma)}{K (1-\gamma^\tau)} } \\
&= \inf_{\alpha_0 \in (0,5/8)} \left(1-\frac58 + \alpha_0\right) \vee \frac{2 + \frac{4 \alpha_0}{(1-\gamma^\tau)} \frac{ \gamma^\tau + 1}{2} }{2+ \frac{4 \alpha_0}{(1-\gamma^\tau)} } \\
&\le \inf_{\alpha_0 \in (0,5/8)} \left(1-\frac58 + \alpha_0\right) \vee \frac{2 + \frac{4 \alpha_0}{(1-\gamma)} \frac{ \gamma + 1}{2} }{2+ \frac{4 \alpha_0}{(1-\gamma)} } < 1, \label{IneqConfusingAlphaBound}
\ee
where the last line follows because the second term in the maximum is decreasing in $\tau$. 
This implies that the geometric convergence rate of $\P$ is at most $\bar \alpha^{1/\tau} < 1$, and so the $L_2(\mu)$ spectral gap of $\P$ is at least
\be
(1-\bar \alpha^{1/\tau})
\ee
by an application of Theorem 2 of \citet{roberts1997geometric}. Inspection of inequality \eqref{IneqConfusingAlphaBound} completes the proof.
\end{proof}

\subsection{Mixing for Unimodal Distributions on Compact Sets}
We now show the first of two Lemmas necessary to prove the mixing time bound inside of $\Theta$. This lemma shows that when started from an initial condition very close to the mode, $\P_\Theta$ will mix rapidly. Our approach is to compare $\P_\Theta$ to a chain with transition kernel $\tilde {\P}_\Theta(x,\cdot) = \mu_{\Theta}(\cdot)$ for all $x \in \Theta$ - this chain simply takes iid samples from its stationary measure. 

We can write the transition densities of $\P$ and $\tilde{\P}$ as:
\be
p_\Theta(x,y) &= \delta_x(y) \int (1-\alpha_\Theta(x,y)) Q(x,dy) + \alpha_\Theta(x,y) q(x,y) \\
\tilde p_\Theta(x,y) &=  p_\Theta(y),
\ee
where
\be
\alpha_\Theta(x,y) = \frac{p_\Theta(y) q(y,x)}{p_\Theta(x) q(x,y)}.
\ee
For $A \subset \Theta$, define the $A$-restricted mixing time 
\be
\tau_A(\P_\Theta) =  \min \{ t : \sup_{x \in A} \|\delta_x \P^t_\Theta - \mu_\Theta \|_{\TV} \le 1/8 \}.
\ee

\begin{lemma} \label{Lemma2Stat}
Suppose that $p$ satisfies \eqref{EqNearlyUniform} and $Q$ satisfies \eqref{IneqFunnyQBounds}. 
Then
\be 
\tau_{B_\epsilon(m)}(\P_\Theta) \leq C \log(16) \epsilon^{-3} \delta_1^{-1} L^{d+3} p_\Theta(m) \frac{\pi^{d/2} 3^{d+2}}{\Gamma(d/2+1)},
\ee 
where $0 < C < \infty$ is a universal constant.
\end{lemma}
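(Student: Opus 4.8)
The plan is to sidestep the exponentially bad constants of a direct multistep minorization and instead lower bound the spectral gap of $\P_\Theta$ by comparing its Dirichlet form to that of the independence sampler $\tilde\P_\Theta$, whose Dirichlet form is exactly the variance $\var_{\mu_\Theta}(f)$; a spectral gap then converts into the restricted mixing-time bound from the warm start $B_\epsilon(m)$. Concretely, I would prove a Poincar\'e inequality $\var_{\mu_\Theta}(f)\le A\,\mathcal E_{\P_\Theta}(f,f)$ for all $f\in L^2(\mu_\Theta)$, so that $\trel(\P_\Theta)\le A$, and build the comparison constant $A$ by the canonical-path method of \citet{diaconis1993comparison,yuen2000applications}. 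The point of using paths rather than minorization is that the Poincar\'e route controls a \emph{sum} of squared local increments, which avoids the geometric-in-path-length decay that a pointwise multistep transition bound would incur.

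The first ingredient is a pointwise lower bound on the one-step density. For a move with $\|x-y\|\le\epsilon$, \eqref{IneqFunnyQBounds} gives proposal density at least $\delta_1$, and since $q$ is isotropic the acceptance ratio is $\alpha_\Theta(x,y)=1\wedge (p(y)/p(x))$; hence the off-diagonal part of $\P_\Theta(x,dy)$ dominates $\delta_1\,(1\wedge (p(y)/p(x)))\,dy$ on $B_\epsilon(x)$. The second ingredient is the path family: for each pair $x,y\in\Theta\subseteq B_L(m)$ I would connect $x$ to $y$ by steps of length at most $\epsilon$, so that every path has length $|\gamma_{xy}|\le 2L/\epsilon$, routing it so as to exploit unimodality. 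Unimodality is exactly what makes the acceptance factors along a path manageable: moving toward the mode the density is increasing so the ratios equal $1$, and the remaining legs telescope, so the accumulated acceptance on a path is controlled by a single density ratio rather than a product that decays with $|\gamma_{xy}|$.

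The heart of the argument, and the step I expect to be the main obstacle, is bounding the congestion
\[
A=\max_{e}\frac{1}{Q(e)}\iint_{\gamma_{xy}\ni e}|\gamma_{xy}|\,\mu_\Theta(dx)\,\mu_\Theta(dy),
\]
where $Q(e)$ is the $\P_\Theta$-weight of a local edge $e$ and the integral is over pairs whose path uses $e$. The difficulty is a genuine tension: edges deep in the tail of $\Theta$ carry very little flow but also have tiny weight $Q(e)\sim\delta_1\,p_\Theta(\cdot)\,\epsilon^{d}$, so neither the flow alone nor $\min_e Q(e)$ alone can be discarded, and one must track both against the radial position of $e$. I would bound the numerator crudely using $\mu_\Theta$-density at most $p_\Theta(m)$ on one of the two factors together with $\Theta\subseteq B_L(m)$, which produces the volume factor $p_\Theta(m)L^{d}\pi^{d/2}/\Gamma(d/2+1)$; the path-length factor $|\gamma_{xy}|\lesssim L/\epsilon$ together with the inverse edge weight then supply $\delta_1^{-1}$, the remaining powers $\epsilon^{-3}$ and $L^{3}$, and the covering factor $3^{d+2}$ needed to tile $B_L(m)$ by $\epsilon$-balls. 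Hypothesis \eqref{EqNearlyUniform} enters precisely here, guaranteeing that near the mode the acceptance ratio stays above $15/16$ so the heavily-used edges around $m$ do not degenerate. Assembling these pieces to land exactly on $A\lesssim \epsilon^{-3}\delta_1^{-1}L^{d+3}p_\Theta(m)\,\pi^{d/2}3^{d+2}/\Gamma(d/2+1)$ is where the real bookkeeping lies.

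Finally I would convert the spectral gap into the restricted mixing time. Starting from $x\in B_\epsilon(m)$, after discarding one step to remove the rejection atom, the law $\delta_x\P_\Theta$ has density relative to $\mu_\Theta$ bounded by an absolute constant near the mode---again by \eqref{EqNearlyUniform}---so its $\chi^2$-divergence from $\mu_\Theta$ is $O(1)$. The standard $L^2$ bound $\|\delta_x\P_\Theta^t-\mu_\Theta\|_{\TV}\le\tfrac12(1-A^{-1})^{t}\sqrt{\chi^2}$ then falls below $1/8$ once $t\gtrsim A\log(16)$, which is the asserted conclusion; the two points needing care are the atom (handled by the burned step) and checking that this warm-start $\chi^2$ is genuinely dimension- and volume-free.
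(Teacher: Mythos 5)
Your proposal follows essentially the same route as the paper's proof: a canonical-path (Dirichlet form) comparison of $\P_\Theta$ against the independence sampler $\tilde\P_\Theta$, using linear $\epsilon$-step paths of length $O(L/\epsilon)$ with unimodality ensuring the edge densities dominate the endpoint densities, which is exactly the application of Theorem 3.2 of \citet{yuen2000applications} in the paper, followed by the same spectral-gap-to-mixing conversion from the warm start $B_\epsilon(m)$ via \eqref{EqNearlyUniform}. Even the caveat you flag at the end (whether the warm-start $\chi^2$ constant is truly dimension- and volume-free, contributing only through a logarithm) is present, and glossed over, in the paper's own final step via Proposition 1.1 of \citet{yuen2000applications}, so your plan is a faithful reconstruction of the published argument.
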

\begin{proof}

Our main tool is Theorem 3.2 of \cite{yuen2000applications}. 
We recall the definition of the ``linear'' set of paths $\Gamma$, consisting of steps of length $\epsilon$, given in Section 2 of \cite{yuen2000applications}. For fixed $x,y \in \Theta$, define the length $b_{x,y} = \lceil \frac{\|x-y\|}{\epsilon} \rceil$. When $\frac{\|x-y\|}{\epsilon}$ is \textit{not} an integer, set 
\be
\gamma_{x,y}^{(i)}  &= x + (i-1) \epsilon (y-x), \quad 0 \leq i < b_{x,y} \\
\gamma_{x,y}^{(b_{x,y})} &= y \\
\gamma_{x,y} &= (\gamma_{x,y}^{(0)}, \ldots,\gamma_{x,y}^{(b_{x,y})}).
\ee
Then $\Gamma = \{\gamma_{x,y} : (x,y) \in \Theta,\,  \alpha_\Theta(u,v) q(u,v) > 0\}$ is the collection of all such paths of finite length. We say that a pair $(u,v) \in \Theta \times \Theta$ is an $i$th edge of the path $\gamma_{x,y}$ iff $u = \gamma_{x,y}^{(i-1)}$ and $v = \gamma_{x,y}^{(i)}$. Let $E_i$ be the collection of the $i$th edges of all paths $\gamma \in \Gamma$, and put $E = \bigcup_{i \in \bb N} E_i$. As shown in Section 2 of \cite{yuen2000applications}, the set of paths $\Gamma$ satisfies the regularity conditions of Theorem 3.2 of that paper and, for any $(u,v) \in E$, the associated Jacobian satisfies $J_{x,y}(u,v) = b_{x,y}^{d}$ (see \citet[page 5]{yuen2000applications} for details). 

Define $\xi(u,v) = \alpha_\Theta(u,v) q(u,v) p_\Theta(u)$ and for any $\gamma_{x,y} \in \Gamma$ put $\|\gamma_{x,y}\|_0 = \sum_{(u,v) \in \gamma_{x,y}} \xi(u,v)^0 = b_{x,y}$. Notice we can also view the comparison kernel $\tilde \P_\Theta$ as a Metropolis-Hastings kernel with acceptance probability $\tilde \alpha(x,y) = 1$ and proposal $\tilde q(x,y) = p_\Theta(y)$. To use  Theorem 3.2 of \cite{yuen2000applications}, we must bound the geometric constant 
\be
A_0(\Gamma) &= \esssup_{(u,v) \in E} \left\{ \xi(u,v)^{-1} \sum_{\gamma_{x,y} \ni (u,v)} \|\gamma_{x,y}\|_0 \tilde p(x) \tilde \alpha(x,y) \tilde q(x,y) |J_{x,y}(u,v)| \right\} \\
&= \esssup_{(u,v) \in E} \left\{ \xi(u,v)^{-1} \sum_{\gamma_{x,y} \ni (u,v)} b_{x,y} p_\Theta(x) p_\Theta(y) b_{x,y}^d \right\}.
\ee
Bounding below by the uniform proposal on $B_\epsilon(x)$ we have using \eqref{IneqFunnyQBounds} and the volume of a unit ball in $\R^d$
\be
\xi(u,v) \ge \left( 1 \wedge \frac{p_\Theta(v)}{p_\Theta(u)} \right) \frac{\Gamma(d/2+1)}{\pi^{d/2}} \epsilon^{-d} \delta_1 p_\Theta(u) \ge (p_\Theta(u) \wedge p_\Theta(v)) \frac{\Gamma(d/2+1)}{\pi^{d/2}} \epsilon^{-d} \delta_1
\ee
which is everywhere positive for any $(u,v) \in \gamma_{x,y}$ by the definition of $\Gamma$. Define $b = \max_{x,y} b_{x,y} \leq 2 L \epsilon^{-1} + 1$, we have
\be
A_0(\Gamma) &= \esssup_{(u,v) \in E} \left\{ \xi(u,v)^{-1} \sum_{\gamma_{x,y} \ni (u,v)} b_{x,y} p_\Theta(x) p_\Theta(y) b_{x,y}^d \right\} \\
&\le  \frac{\pi^{d/2}}{\Gamma(d/2+1)} \delta_1^{-1} \epsilon^d (p_\Theta(u) \wedge p_\Theta(v))^{-1} b^{3+d} p_\Theta(x) p_\Theta(y) \\
&\le \frac{\pi^{d/2} \epsilon^d}{\Gamma(d/2+1)} \delta_1^{-1} b^{3+d} \frac{p_\Theta(x) \wedge p_\Theta(y)}{p_\Theta(u) \wedge p_\Theta(v)} p_\Theta(m) \label{eq:Path1}
\ee
where in the second line we used the fact that there are at most $\ell$ starting points for paths of length $\ell$ that contain the edge $(u,v)$ and $\sum_{j=1}^{\ell} j = (\ell^2+\ell)/2 \le \ell^2$.
Since $p$ is multivariate unimodal (see Definition \ref{def:Unimodal}), we note that
\be
\frac{p_\Theta(x) \wedge p_\Theta(y)}{p_\Theta(u) \wedge p_\Theta(v)} \le 1
\ee
for any points $u,v$ on a linear path from $x$ to $y$, and therefore
\be
A_0(\Gamma) &\le \frac{\pi^{d/2} \epsilon^d}{\Gamma(d/2+1)} \delta_1^{-1} b^{3+d} p_\Theta(m). 
\ee
Thus, combining with \eqref{eq:Path1}, we obtain with $b = 2 \epsilon^{-1} L + 1$ the maximum length of a path consisting of steps of size $\epsilon$ connecting two points inside a ball of radius $L$
\be
A_0(\Gamma) &\le \frac{\pi^{d/2}}{\Gamma(d/2+1)} 3^{2+d} \epsilon^{-3} L^{d+3} \delta_1^{-1} p_\Theta(m), 
\ee
where we used that $\epsilon \le L$ so that $2\epsilon^{-1} L + 1 < 3 \epsilon^{-1} L$.
It follows that the spectral gap 
$\alpha$ of $\P_{\Theta}$ satisfies 
\be \label{IneqGenUniSpec}
\alpha \geq 3^{-(d+2)} \epsilon^3 \delta_1 L^{-(d+3)} p_\Theta(m)^{-1} \frac{\Gamma(d/2+1)}{\pi^{d/2}}.
\ee

By Inequality \eqref{EqNearlyUniform}, observe that we can write
\[
\delta_x \P_\Theta = \frac{15}{16} \mu_\Theta |_{B_{2 \epsilon}(m)} + \frac{1}{16} r_x
\]
for some ``remainder'' measure $r_x$. Applying Proposition 1.1 of \cite{yuen2000applications} and the bound \eqref{IneqGenUniSpec} on the spectral gap of $\P_\Theta$, this implies there exists some absolute constant $0 < C < \infty$ such that
\be \label{IneqQuickMixMiddle}
\sup_{x \in B_\epsilon(m) } \| \delta_x \P^{t+1}_\Theta - \mu_{\Theta} \|_{\TV} &\leq \sup_{x \in B_{\epsilon}(m)} \| \mu_{\Theta}|_{B_\epsilon(x)} \P^t_\Theta - \mu_{\Theta} \|_{\TV} + \frac{1}{16} \\
&\leq \frac18
\ee 
for all 
\be
t > C \log(16) \epsilon^{-3} \delta_1^{-1} L^{d+3} p_\Theta(m) \frac{\pi^{d/2} 3^{d+2}}{\Gamma(d/2+1)}.
\ee
\end{proof}

We prove the last lemma:

\begin{lemma} \label{Lemma3Stat}
 Suppose $\P_\Theta$ is a Metropolis-Hastings kernel on $\Theta \subset \R$ satisfying Assumption \ref{ass:MHKernel}.  Then there exists some constant $C = C(\delta_{1}, c_{1}, c_{2}) < \infty$ such that
 \be
 \tau(\P_\Theta) \le \tau_{[m-\epsilon,m+\epsilon]}(\P_\Theta) + C \frac{L^{2}}{\epsilon^{2}}.
 \ee
\end{lemma}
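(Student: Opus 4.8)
The plan is to decompose mixing into a burn-in phase, during which the chain reaches the central interval $A := [m-\epsilon,m+\epsilon]$ from an arbitrary start, followed by the $A$-restricted mixing already quantified by $\tau_{[m-\epsilon,m+\epsilon]}(\P_\Theta)$. Let $\sigma$ be the first hitting time of $A$ and run the chain for $t = s + \tau_{[m-\epsilon,m+\epsilon]}(\P_\Theta)$ steps. Splitting on $\{\sigma \le s\}$ and using the strong Markov property, for any $x \in \Theta$,
\be
\|\delta_x \P_\Theta^t - \mu_\Theta\|_{\TV} \le \Pr_x[\sigma > s] + \E_x\!\left[\, \|\delta_{X_\sigma} \P_\Theta^{\,t-\sigma} - \mu_\Theta\|_{\TV}\, \1_{\sigma \le s}\,\right].
\ee
On $\{\sigma \le s\}$ one has $X_\sigma \in A$ and $t - \sigma \ge \tau_{[m-\epsilon,m+\epsilon]}(\P_\Theta)$, so---since the total-variation distance to $\mu_\Theta$ is non-increasing along the semigroup---the conditional term is at most $\tfrac18$. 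It therefore suffices to choose $s$ with $\sup_{x \in \Theta} \Pr_x[\sigma > s] \le \tfrac18$; by Markov's inequality this holds once $s \ge 8 \sup_{x \in \Theta} \E_x[\sigma]$, and then the right-hand side is at most $\tfrac18 + \tfrac78 \cdot \tfrac18 = \tfrac{15}{64} < \tfrac14$, meeting the definition of $\tau(\P_\Theta)$ with $t = \tau_{[m-\epsilon,m+\epsilon]}(\P_\Theta) + 8\sup_{x \in \Theta}\E_x[\sigma]$.

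Everything thus reduces to the hitting-time bound $\sup_{x \in \Theta} \E_x[\sigma] \le C L^2/\epsilon^2$. I would obtain it from a Foster--Lyapunov estimate: construct $W : \Theta \to [0,\infty)$ with $W \equiv 0$ on $A$, $\sup_\Theta W \le C L^2/\epsilon^2$, and $(\P_\Theta W)(x) - W(x) \le -1$ for all $x \in \Theta \setminus A$; then $W(X_{t \wedge \sigma}) + (t \wedge \sigma)$ is a supermartingale and optional stopping gives $\E_x[\sigma] \le W(x)$. The natural candidate is the concave distance function $W(x) = C_0\,\epsilon^{-2}\big(2L|x-m| - (x-m)^2\big)$, which is nonnegative and increasing in $|x-m|$ throughout $B_L(m) \supseteq \Theta$ and bounded by $C_0 L^2/\epsilon^2$.

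The drift condition is where unimodality of $p$ on $\Theta$ enters. For $x > m$, a proposed increment toward the mode lands in a region of no-smaller density and is accepted outright, whereas an increment away from the mode is accepted with probability $\le 1$; pairing the $\pm$ increments of the symmetric proposal $q$ and using concavity of $W$, the first-order terms contribute nonpositively and the net effect is dominated by the second-order term $\tfrac12 W'' \E[\eta^2] < 0$. Lower bounding the step variance through \eqref{IneqFunnyQBounds}, $\E[\eta^2] \ge \delta_1 \int_{-\epsilon}^{\epsilon} s^2\,ds \asymp \delta_1 \epsilon^3$, and choosing $C_0$ accordingly makes this term $\le -1$. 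The important feature is that this argument survives the \emph{flat} case, where $p$ is (almost) constant and the acceptance-driven drift disappears entirely: the contraction then comes purely from the curvature of $W$ against the proposal variance, which is exactly the diffusive mechanism behind the $L^2/\epsilon^2$ scaling (equivalently, a gambler's-ruin/birth--death comparison on $\{0,\dots,\lceil L/\epsilon\rceil\}$ gives the same quadratic time).

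The main obstacle is turning this clean bulk computation into a uniform inequality in the presence of boundary and mode-crossing effects. Near $\partial\Theta$, proposals exiting $\Theta$ are rejected; because $W$ increases toward the boundary, such rejected moves would only have raised $W$, so dropping them is favorable and preserves the drift. Near $m$ a single step can cross from $\{x > m\}$ to $\{x < m\}$, which breaks the naive ``toward-the-mode'' bookkeeping; it is cleanest to prove the drift inequality on $\Theta \setminus [m-2\epsilon,m+2\epsilon]$ and then absorb the $O(1)$ extra steps needed to pass from $[m-2\epsilon,m+2\epsilon]$ into $A$ using the near-constant hypothesis \eqref{EqNearlyUniform} together with the proposal lower bound $\delta_1$. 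Verifying that these corrections only inflate the constant $C = C(\delta_1, c_1, c_2)$---and in particular reconciling the curvature requirement on $W$ with the claimed power of $L/\epsilon$---is the delicate but essentially routine remainder.
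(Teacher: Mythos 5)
Your first paragraph --- splitting at the hitting time $\sigma$ of $A=[m-\epsilon,m+\epsilon]$, invoking the strong Markov property and the monotonicity of total-variation distance to $\mu_\Theta$, so that everything reduces to $\sup_{x\in\Theta}\E_x[\sigma]\lesssim L^2/\epsilon^2$ --- is correct, and it is exactly the decomposition the paper uses in its final step. Where you diverge is the hitting-time bound itself: the paper couples the Metropolis chain to a random walk censored at the boundary and to a free random walk driven by the same increments, uses unimodality to argue the Metropolis chain reaches the mode at least as readily (up to an acceptance factor $\delta(c_1,c_2)$) as the censored walk, and extracts the $L^2/\epsilon^2$ scale from Berry--Esseen applied to the free walk; no potential function and no acceptance-probability bookkeeping near $m$ is needed. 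Your Foster--Lyapunov route is genuinely different, and it is precisely at the acceptance bookkeeping that it breaks.

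The drift inequality $(\P_\Theta W)(x)-W(x)\le-1$ on $\Theta\setminus[m-2\epsilon,m+2\epsilon]$ is not merely delicate: it is false for your $W$, and the obstruction is structural. Your pairing argument (inward step accepted outright, outward step accepted with probability $\le 1$, concavity turns the paired sum into a negative second-order term) only covers proposals that stay on one side of $m$. A proposal from $x=m+u$ that jumps across the mode to $m-(\eta-u)$ with $\eta>2u$ lands \emph{farther} from $m$ and so increases $W$, by as much as $W'(u)(\eta-2u)\approx 2C_0L\epsilon^{-2}(\eta-2u)$; for an asymmetric unimodal target (heavy left side, light right side), and even for the exactly uniform target --- which satisfies every hypothesis of the theorem --- such a move is accepted with probability $1$. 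Since Assumption \ref{ass:MHKernel} only gives exponential, not compactly supported, proposal tails, these jumps occur with density $\lesssim c_1e^{-c_2\eta/\epsilon}$, and their total positive contribution to the drift is of order $(c_1/c_2^2)\,C_0L\,e^{-2c_2u/\epsilon}$, whereas the negative contribution your pairing produces is only of order $C_0\delta_1\epsilon$. The drift therefore fails for all $u\le\frac{\epsilon}{2c_2}\log\frac{c_1L}{c_2^2\delta_1\epsilon}$, i.e.\ on a zone of width $\asymp\epsilon\log(L/\epsilon)$ rather than $2\epsilon$. Your proposed patch cannot close this gap: \eqref{EqNearlyUniform} controls $p$ only on $B_{2\epsilon}(m)$, and passing from distance $\epsilon\log(L/\epsilon)$ into $B_\epsilon(m)$ is not an $O(1)$-step event (the lower bound $q\ge\delta_1$ covers only steps of length $\le\epsilon$, so a direct estimate costs $(c\,\delta_1\epsilon)^{\Theta(\log(L/\epsilon))}$, polynomial in $\epsilon/L$, which would pollute the claimed rate). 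Nor can a cleverer potential save the framework: if $W=C_0\epsilon^{-2}g(|x-m|)$ with $g$ concave and nondecreasing on $[0,L]$, the requirement that curvature at scale $\epsilon$ produce drift $\le-1$ uniformly on $(2\epsilon,L)$ forces $g'$ to drop by $\gtrsim L$ across that interval, hence $g'(0)\gtrsim L$ --- which is exactly what makes the crossing term large. Some mechanism that does not pass through acceptance probabilities near the mode, such as the paper's random-walk domination plus CLT, is needed to finish the proof.
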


\begin{proof}
Define the function $F$ by
\be 
F(x,\Delta,U) = x + \Delta \mathbf 1\{U < \alpha_\Theta(x,x+\Delta)\}.
\ee 
Note that if $\Delta \sim Q(x,\cdot)$ and $U \sim \mathrm{Uniform}(0,1)$ with $\Delta \ci U$, then
\be
F(x,\Delta,U) \sim \P_\Theta(x,\cdot),
\ee
so $F$ defines a \textit{forward mapping representation} of $\P_{\Theta}$. Next, let $\Delta_t \stackrel{iid}{\sim} Q(0,\cdot)$ and $U_t \stackrel{iid}{\sim} \mathrm{Uniform}(0,1)$. We fix $x \in (m+\epsilon,m+L] \cap \Theta$ and consider a Markov chain $(X_t,Y_t,Z_t)$ on $\X \times \X \times \X$, with initial state $(x,x,x)$ and dynamics defined jointly by
\be
X_{t+1} &= F(X_t,\Delta_t,U_t) \\
Y_{t+1} &= Y_t + \Delta_t \mathbf 1(Y_t + \Delta_t \in [-L,L]) \\
Z_{t+1} &= Z_t + \Delta_t.
\ee
We focus initially on the properties of $(X_t,Y_t)$. Define
\be 
\thit^x(x) &= \inf \{t \geq 0 \,: \, X_t \in [m-\epsilon,m+\epsilon] \} \\
\thit^y(x) &= \inf \{t \geq 0 \,: \, Y_t \in [m-L,m+\epsilon] \}. 
\ee 
Defining $\tau^* = \tau^x(x) \wedge \tau^y(x)$, it is clear that $m+\epsilon \le X_t \le Y_t$ for all $t < \tau^*$.
We also have
\be
\Pr[ X_{t+1} \in [m,m+\eps] \mid X_t > m+\eps, t < \tau^*] &\geq \Pr[ Y_{t+1} \in [m,m+\eps] \mid Y_t > m+\eps, t < \tau^*]
\ee
and
\be
\Pr[ Y_{t+1} \in [m,m+\eps] \mid Y_{t+1} \in [m-L,m+\eps], Y_t > m+\eps, t < \tau^*] &> \delta \equiv \delta(c_{1}, c_{2}) > 0,
\ee
where the second bound comes from Inequality \eqref{IneqFunnyQBounds}. Combining these two bounds, we have shown
\be
\Pr[\thit^x(x) \le t] &\ge \delta \Pr[\thit^y(x) \le t].
\ee

Now we just need a bound on $\thit^y(x)$, which we obtain by comparing $Y_t$ and $Z_t$. By the Berry-Esseen theorem and the sub-exponential tail bound in Inequality  \eqref{IneqFunnyQBounds}, there exists a $C_1 = C_{1}(c_{1},c_{2},\delta_{1}) < \infty$ such that, for all  $t > C_1 \frac{L^2}{\epsilon^2}$,
\be
\Pr[Z_t < m] > \frac{1}{8}.
\ee
Using again the fact that the tails of $Q$ are sub-exponential (in the sense of Inequality \eqref{IneqFunnyQBounds}), along with the fact that $Y_t \le Z_t$ for all $t < \min \{s : Z_s < -L\}$, there exists $C_2 = C_{2}(c_{1},c_{2},\delta_{1}) > 0$ such that
\be
\Pr[\min_{0 \le s \le t} Y_s \le m+\epsilon] \ge C_2 \Pr[Z_t < m] \ge \frac{C_2}{8} \equiv \eta.
\ee
This implies
\be 
\Pr\left[ \thit^y(x) > C_3 \frac{L^2}{\eps^2} \right] &\le 1-\eta \\
\Pr\left[ \thit^x(x) \le C_3 \frac{L^2}{\eps^2} \right] &\ge \delta \eta;  \\
\ee 
for some constant $C_{3} > 0 $. Using the strong Markov property, we conclude that for all $T \in \mathbb{N}$
\be \label{EqHittingIneq}
\Pr\left[ \thit^x(x) > C_3 T \frac{L^2}{\epsilon^2} \right] \leq (1-\delta \eta)^{T}.
\ee 
We proved this inequality for $x \in [m+\epsilon,L] \cap \Theta$. By the symmetry of the situation, it is clear that this also holds for $x \in [-L, m-\epsilon] \cap \Theta$, and of course it trivially holds for $x \in [m-\epsilon,m+\epsilon] \cap \Theta$. Thus, Inequality \eqref{EqHittingIneq} holds for all $x \in \Theta$.

Combining Inequality \eqref{IneqQuickMixMiddle} and  Inequality \eqref{EqHittingIneq} (with the choice $T = (-\log (8))/\log(1-\delta \eta)$), and setting $t =  \tau_{[m-\epsilon,m+\epsilon]}(\P_{\Theta}) + C_{3} T \frac{L^{2}}{\epsilon^{2}}$, we have for $x \in \Theta$: 
\be
\|\delta_x \P^{t} - \mu_\Theta\|_{\TV} &\le \Pr[\thit^x(x) < C_{3}T \frac{L^{2}}{\epsilon^{2}}]  + \sup_{x \in [m-\epsilon, m+ \epsilon] \cap \Theta} \|\delta_x \P^{\tau_{[m-\epsilon,m+\epsilon]}(\P_{\Theta})} - \mu_\Theta\|_{\TV} \\
&\le \frac{1}{8} + \frac{1}{8} = \frac{1}{4}.
\ee
This completes the proof of the lemma.
\end{proof}

\subsection{Proof of Theorem \ref{ThmMainThm}}

Theorem \ref{ThmMainThm} follows quickly from our three main lemmas. Inequality \eqref{IneqMainThmConc1} is an immediate consequence of Lemmas \ref{Lemma2Stat} and \ref{Lemma3Stat}. Inequality \eqref{IneqMainThmConc2} is an immediate consequence of  Inequality \eqref{IneqMainThmConc1} and Lemma \ref{Lemma1Stat}.

\bibliographystyle{apalike}
\bibliography{lvmixing}

\end{document}